\documentclass[11pt]{amsart}
\usepackage[utf8]{inputenc}

\usepackage[margin=1in]{geometry}

\usepackage{enumerate}
\usepackage{amsthm}
\usepackage{amsmath}
\usepackage{amsfonts}
\usepackage{amssymb}
\usepackage{graphicx}
\usepackage{color}
\usepackage{graphics}
\usepackage{eepic}
\usepackage{nicefrac}

\usepackage{tikzsymbols}

\newcommand{\ignore}[1]{}


\renewcommand{\Re}{\operatorname{Re}}
\renewcommand{\Im}{\operatorname{Im}}


\newcommand{\C}{{\mathbb{C}}}
\newcommand{\R}{{\mathbb{R}}}

\newcommand{\D}{{\mathbb{D}}}


\newcommand{\overbar}[1]{\mkern 1.5mu\overline{\mkern-1.5mu#1\mkern-1.5mu}\mkern 1.5mu}



\newtheorem{thm}{Theorem}[section]

\newtheorem{cor}[thm]{Corollary}

\theoremstyle{definition}

\newtheorem{example}[thm]{Example}

\theoremstyle{remark}

\title{Convexity of level lines of Martin functions and applications}
\author{Gallagher, A.-K.}
\address{Department of Mathematics, Oklahoma State University, Stillwater, OK, 74074}
\email{anne-katrin.gallagher@okstate.edu}

\author{Lebl, J.}
\address{Department of Mathematics, Oklahoma State University, Stillwater, OK, 74074}
\email{lebl@okstate.edu}

\thanks{The second author was in part supported by NSF grant DMS-1362337.}

\author{Ramachandran, K.}
\address{Department of Mathematics, Oklahoma State University, Stillwater, OK, 74074}
\email{koushik.ramachandran@okstate.edu}

\begin{document}

\begin{abstract}
Let $\Omega$ be an unbounded domain in $\mathbb{R}\times\mathbb{R}^{d}.$ A positive harmonic function $u$ on $\Omega$ that vanishes on the boundary of $\Omega$ is called a Martin function. In this note, we show that, when $\Omega$ is convex, the superlevel sets of a Martin function are also convex. As a consequence we obtain that if in addition $\Omega$ has certain symmetry with respect to the $t$-axis, and $\partial\Omega$ is sufficiently flat, then the maximum of any Martin function along a slice $\Omega\cap (\{t\}\times\mathbb{R}^d)$ is attained at $(t,0)$.
\end{abstract}

\maketitle

\section{Introduction}
This article pertains to the study of convexity of superlevel sets of positive harmonic functions in unbounded convex domains in $\mathbb{R}^n$ for $n\geq 2$. The geometry and topology of level sets of functions is a fundamental and an important topic in analysis and differential geometry. Hence it is not surprising that the study of convexity of level lines of harmonic functions has a long and rich history. Perhaps one of the earliest results in this field is that level lines of the Green function of a convex domain $\Omega\subset\mathbb{C}$ are convex, see Theorem 1.3 in \cite{AH}. Gabriel \cite{G} extended this result to convex domains in $\mathbb{R}^3$. This was further generalized by Lewis \cite{JL} who proved convexity of level lines of $p$-capacitary functions in convex rings in dimensions $n\geq 3$. Convexity of level lines to solutions of more general elliptic operators was analyzed by Caffarelli and Spruck in \cite{CS}.

\vspace{0.1in}

 Most of the results on convexity in the literature on partial differential equations are in the spirit of the following theorem, cf. \cite{CS, JL, Long}. 

\begin{thm}\label{Long}
Let $A$ and $B$ be bounded convex domains in $\mathbb{R}^n$, $n\geq 2$, with $B\subset A$. Let $u$ be a harmonic function in $A\setminus B$, continuous on $\overline{A\setminus B}$ such that $u\equiv 1$ on $\partial B$ and $u\equiv 0$ on $\partial A$. Then the set $\{x\in A\setminus B: u(x)\geq c\}\cup B$ is convex for every $c\in(0,1)$.
\end{thm}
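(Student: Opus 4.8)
The plan is to reduce the statement to a \emph{quasiconcavity} assertion for a single function and then to run a two-point maximum principle of Korevaar--Kennington type. First I would define $\tilde u$ on $\overline A$ by setting $\tilde u=u$ on $\overline{A\setminus B}$ and $\tilde u\equiv 1$ on $\overline B$; by the maximum principle $0\le u\le 1$ in the ring, so $\tilde u$ is continuous on $\overline A$, harmonic on the open ring $A\setminus\overline B$, and its superlevel set $\{\tilde u\ge c\}$ is exactly $\{x\in A\setminus B:u(x)\ge c\}\cup B$. Thus Theorem~\ref{Long} is equivalent to the assertion that every superlevel set of $\tilde u$ is convex, i.e. that $\tilde u$ is quasiconcave on $A$. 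Since superlevel sets are closed, $A$ is convex, and midpoint quasiconcavity upgrades to full quasiconcavity by continuity, it suffices to prove
\[
\tilde u\!\left(\tfrac{x+y}{2}\right)\;\ge\;\min\{\tilde u(x),\tilde u(y)\},\qquad x,y\in A.
\]

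Because $A$ and $B$ are only assumed convex, I would first smooth them: approximate $A$ from inside and $B$ from outside by strictly convex domains with real-analytic boundary, for which the corresponding capacitary function is smooth up to the boundary and, as is classical for convex rings, has nonvanishing gradient. Convexity of superlevel sets is preserved under Hausdorff limits, so it is enough to establish the displayed inequality in this smooth nondegenerate setting and then pass to the limit.

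Now set $C(x,y)=\tilde u(\tfrac{x+y}{2})-\min\{\tilde u(x),\tilde u(y)\}$ and suppose for contradiction that $m:=-\min_{A\times A}C>0$, attained at a pair $(x_0,y_0)$. Boundary bookkeeping removes the easy configurations: if either point lies on $\partial A$ then the right-hand side of the target inequality is $0\le\tilde u(\tfrac{x_0+y_0}{2})$, a contradiction; if the midpoint lies in $\overline B$ then the left-hand side equals $1\ge\min\{\tilde u(x_0),\tilde u(y_0)\}$, again impossible; and if either of $x_0,y_0$ lies in $\overline B$ one exploits $\tilde u\le 1$ together with the convexity of $B$ to relocate the configuration into the harmonic region. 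Hence $x_0$, $y_0$, and their midpoint all lie where $\tilde u=u$ is harmonic. If $u(x_0)\ne u(y_0)$, say $u(x_0)<u(y_0)$, then $C(x,y)=u(\tfrac{x+y}{2})-u(x)$ near $(x_0,y_0)$, and the first-order condition in $y$ forces $\nabla u$ to vanish at the midpoint, which the nonvanishing gradient excludes. The substantive case is the tie $u(x_0)=u(y_0)$, where $\min$ fails to be differentiable; here one uses one-sided second-derivative tests along the connecting segment together with the harmonicity $\Delta u=0$, in the manner of Korevaar's concavity maximum principle, to contradict $m>0$.

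The main obstacle I expect is not the interior maximum-principle computation, which is by now standard, but the interface analysis at the inner boundary $\partial B$: the concavity function must be controlled for pairs $(x,y)$ one or both of whose members sit on or near $\overline B$, where $\tilde u$ ceases to be harmonic and is merely the constant $1$. The convexity of $B$ is precisely what is needed there, and making the relocation argument rigorous---ensuring that any putative negative minimum of $C$ can always be pushed into the harmonic region---is the delicate point. A cleaner alternative that sidesteps part of this is the constant-rank route: show that for $c$ near $1$ the level set $\{u=c\}$ is a small convex perturbation of the convex surface $\partial B$, and then invoke the constant-rank theorem for the second fundamental form of the level sets to propagate convexity across the entire ring. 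I would keep this in reserve in case the two-point argument becomes unwieldy near $\partial B$.
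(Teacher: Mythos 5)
The paper never proves Theorem~\ref{Long}: it is quoted as background and attributed to the literature (cf.\ \cite{CS, JL, Long}; the Green's function case goes back to Gabriel \cite{G}, and a complete proof is Theorem 3.1.14 in H\"ormander \cite{Ho}, which is exactly what the authors invoke later when proving their Theorem~\ref{convex}). So your attempt can only be judged against those classical arguments and on its own completeness, and there it has a genuine gap. Your reduction to quasiconcavity of the extension $\tilde u$, the smoothing step, and the no-tie case are all sound (in the no-tie case the first-order condition does force $\nabla u$ to vanish at the midpoint). The gap is the tie case $u(x_0)=u(y_0)$, which you dispose of as following ``in the manner of Korevaar's concavity maximum principle.'' It does not, and this case is the entire heart of the theorem. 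Korevaar's principle applies to the \emph{smooth} concavity function $u(\frac{x+y}{2})-\frac{1}{2}(u(x)+u(y))$, where an interior minimum yields first- and second-order conditions at all three points; note also that the capacitary function of a ring is convex along radial rays near $\partial B$, so it is not concave and Korevaar's statement itself gives nothing here. Your $C$ is instead a \emph{maximum} of two smooth branches, $C=\max\{f_1,f_2\}$ with $f_1(x,y)=\tilde u(\frac{x+y}{2})-\tilde u(x)$ and $f_2(x,y)=\tilde u(\frac{x+y}{2})-\tilde u(y)$, and at a tied minimum neither branch is at a local minimum. All one gets is the viscosity-type inequality $\max\{D_wf_1,D_wf_2\}\ge 0$ for every direction $w$, whose first-order content is $0\in\operatorname{conv}\{\nabla f_1,\nabla f_2\}$; unwinding this shows $\nabla u(x_0)$, $\nabla u(y_0)$, $\nabla u(z_0)$ are positive parallel multiples of one another with $2/|\nabla u(z_0)|=1/|\nabla u(x_0)|+1/|\nabla u(y_0)|$, where $z_0$ is the midpoint. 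Second-order information is then available only along directions annihilating both branch gradients, e.g.\ $w=(\lambda v,(1-\lambda)v)$ with $\nabla u(z_0)=2\lambda\nabla u(x_0)=2(1-\lambda)\nabla u(y_0)$, and minimality gives $\frac14 v^{*}H_u(z_0)v\ \ge\ \min\{\lambda^2 v^{*}H_u(x_0)v,\ (1-\lambda)^2 v^{*}H_u(y_0)v\}$ for all $v\in\R^n$. Summing over an orthonormal frame and using $\Delta u=0$ yields $0\ge\sum_i\min\{\cdots\}$, which is \emph{not} a contradiction, since a sum of minima is automatically $\le\min$ of the sums $=0$: the trace argument degenerates precisely because of the $\min$, and harmonicity alone, used this way, closes nothing.

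This is not a presentational quibble; it is why the classical proofs take entirely different routes (Gabriel's three-point argument, reproduced in \cite{Ho}; the PDE methods of Lewis \cite{JL} and Caffarelli--Spruck \cite{CS}; or the quasiconcave-envelope/viscosity method of Colesanti--Salani), and why a genuine two-point proof for harmonic functions in convex rings appeared only decades later (B.~Weinkove, \emph{Convexity of level sets and a two-point function}, Pacific J.\ Math.\ 295 (2018)), by an analysis considerably more delicate than Korevaar's computation. The delicacy is intrinsic: level-set convexity actually \emph{fails} for some semilinear equations in convex rings (Hamel--Nadirashvili--Sire) and quasiconcavity is not preserved by the heat flow (Ishige--Salani), so no soft two-point maximum principle that does not exploit harmonicity in a sharp quantitative way can settle your tie case. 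Secondary, but also unresolved: your ``relocation'' near $\partial B$ does not work as described, since sliding $x_0$ to $\partial B$ along $[x_0,z_0]$ need not preserve minimality of $C$ (this part looks repairable, e.g.\ by proving segmental quasiconcavity only for segments contained in the closed ring and using convexity of $B$ to patch across $\overline{B}$). Your fallback constant-rank/deformation route is in fact much closer to how the literature, and this paper itself in Theorem~\ref{convex}, actually proceeds, but as written it too is a program rather than a proof.
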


\noindent Analogs of the above result for more general operators, including nonlinear ones, have been studied in great detail. The literature is exhaustive and we refer the reader to the references stated for instance in \cite{CS} and \cite{KO}. Broadly speaking, these results are indicative of the fact that for many elliptic operators, convexity \emph{propagates}, i.e., convexity of the boundary implies convexity of the level lines of the functions contained in the kernel of the operator in consideration. Study of curvature properties of level lines is another active area of research. These properties are not addressed in this paper. However, the reader is referred to \cite{CMY} and the references therein. 

\vspace{0.1in}

 Let $\Omega$ be an unbounded domain in $\mathbb{R}^{d+1}$, $d\geq 1$. Denote points in $\mathbb{R}^{d+1}$ by $x = (t, Y)$ for $t\in\mathbb{R}$ and $Y\in\mathbb{R}^d$. A positive harmonic function $u$ in $\Omega$ that vanishes on the boundary $\partial\Omega$ is called a Martin function on $\Omega$. For $c>0$, denote $\Gamma_c^u = \{x\in\Omega: u(x) > c\}$ the superlevel set of $u$ associated to $c$.  The superscript will be dropped whenever there is no ambiguity. We say that $\Gamma_c^u$ is strictly convex if for every $x$ such that $u(x) = c,$ $\xi^{*}H_u(x)\xi < 0$ holds for all $\xi$ in the tangent space to the level set at $x.$ Here $H_u$ denotes the Hessian matrix of $u.$ The main result of this note is:

\begin{thm}\label{convex}
Let $\Omega\subsetneq\mathbb{R}^{d+1}$ be an unbounded, convex domain, $d\geq 1$. Suppose $u$ is a Martin function on $\Omega.$ Then $\Gamma_c$ is convex for any $c>0$. Moreover, the superlevel sets $\Gamma_c$, $c>0$, are strictly convex everywhere or nowhere.  In the latter case, after a possible rotation, $\Omega = \widetilde{\Omega} \times \R^k$ for some $k \geq 1$ and some domain $\widetilde{\Omega} \subset \R^{d+1-k}$.
\end{thm}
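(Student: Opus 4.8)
The plan is to break the statement into three parts and treat them in order: first the convexity of $\Gamma_c$, which I would deduce from the bounded convex-ring theory (Theorem~\ref{Long}) by approximation; then the strict-convexity dichotomy and the product structure, which I would extract from a constant rank theorem for the second fundamental form of the level sets.

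For the convexity of $\Gamma_c$, the key observation is that $u$ should be realized as a limit of normalized Green functions. Since $\Omega$ is a proper convex domain, a supporting hyperplane at any boundary point places $\Omega$ inside a half-space, so $\Omega$ is Greenian. I would exhaust $\Omega$ by the bounded convex domains $\Omega_R=\Omega\cap B_R$, so that $G_{\Omega_R}(\cdot,p)\nearrow G_\Omega(\cdot,p)$ as $R\to\infty$. For fixed $R$ and pole $p$, applying Theorem~\ref{Long} to the convex ring $\Omega_R\setminus\overline{B(p,\varepsilon)}$, with the harmonic function equal to $1$ on $\partial B(p,\varepsilon)$ and $0$ on $\partial\Omega_R$, shows that each of its superlevel sets is convex; letting $\varepsilon\to0$ and rescaling the level (the condenser potential converges to a multiple of $G_{\Omega_R}(\cdot,p)$) shows every set $\{G_{\Omega_R}(\cdot,p)>c\}$ is convex. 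Taking the increasing union over $R$ preserves convexity, and then sending the pole $p$ to the boundary point at infinity defining $u$, so that the normalized Green functions converge to $u$ locally uniformly, the superlevel sets converge and convexity passes to the limit. Here it is essential that $u$ be the Martin kernel of a single boundary point, realized as such a limit; this is exactly where minimality of $u$ is used.

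With $\Gamma_c$ convex, each regular level set $\{u=c\}$ is a convex hypersurface, so at every point its second fundamental form with respect to the inner normal $\nabla u/\snorm{\nabla u}$ is positive semidefinite; equivalently $\xi^{*}H_u(x)\xi\le 0$ for all $\xi$ tangent to the level set, the normal direction being controlled by $\Delta u=0$. Strict convexity at $x$ is precisely the strict inequality, i.e.\ the tangential Hessian $H_u(x)|_{T_x}$ — a form of size $d$, proportional to the second fundamental form — is negative definite. To obtain the dichotomy I would invoke a constant rank theorem (of Caffarelli--Friedman and Korevaar--Lewis type) for the second fundamental form of the convex level sets of the harmonic function $u$: its rank is constant along each connected level set, and since the level sets foliate $\Omega$ and $u$ is real-analytic, the rank is constant throughout $\Omega$. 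If that constant rank equals the maximal value $d$ the sets are strictly convex everywhere, and otherwise nowhere.

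In the degenerate case the form has constant rank $r<d$, so at every $x$ there is a $k=(d-r)\ge 1$ dimensional null space $N(x)\subset T_x\{u=c\}$ on which $H_u(x)$ vanishes. The structural conclusion of the constant rank theorem gives that $N$ is a smooth integrable distribution whose leaves are line segments along which the level set is flat and $u$ is constant. I would then globalize: using the \emph{global} convexity of $\Gamma_c$, these flat leaves extend to entire straight lines inside the level set; the null directions cannot rotate from point to point, so $N$ is a fixed $k$-plane $V$; and $u$ is infinitesimally invariant under translations in $V$, whence by real-analyticity genuinely invariant. Since $\partial\Omega=\{u=0\}$ is invariant as well, $\Omega$ is a $V$-invariant convex set, i.e.\ $\Omega=\widetilde{\Omega}\times V$, and after a rotation carrying $V$ to $\R^k$ we obtain $\Omega=\widetilde{\Omega}\times\R^k$. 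The main obstacle, I expect, is precisely this globalization: upgrading the pointwise, infinitesimal degeneracy produced by the constant rank theorem to a rigid product decomposition — in particular forcing the null directions to be globally constant and the flat leaves to be complete lines filling $\Omega$ — which requires combining global convexity and real-analyticity rather than the local constant rank statement alone. A secondary technical point is the limiting argument of the first part, where one must verify that the poles can be driven to the correct boundary point at infinity so that the normalized Green functions converge to the given $u$, and that convexity survives both the exhaustion and this limit.
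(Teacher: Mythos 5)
Your first part (convexity of $\Gamma_c$) follows essentially the same route as the paper: the paper also writes $u(x)=\lim_n G(x,x_n)/G(x_0,x_n)$ and passes convexity of the Green superlevel sets $F_n=\{x: G(x,x_n)>c\,G(x_0,x_n)\}$ to the limit. The differences are minor: the paper quotes H\"ormander (Theorem 3.3.27 in \cite{Ho}) for strict convexity of the $F_n$ directly on the unbounded convex domain, where you re-derive it from Theorem \ref{Long} by a double approximation (shrinking the inner ball, then exhausting $\Omega$); and the paper makes your phrase ``the superlevel sets converge and convexity passes to the limit'' precise via the set-theoretic identity $\overbar{\cup_{k}\cap_{n\geq k}F_n}=\overbar{\Gamma_c}$, so that convexity is inherited from an increasing union of convex sets rather than from any (unjustified) Hausdorff convergence of level sets, which locally uniform convergence of the functions alone does not give. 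Your caveat that the argument needs $u$ to be realized as a limit along a single sequence of poles is consistent with the paper, whose proof makes exactly the same assumption when invoking the representation.

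The genuine gap is in the dichotomy and the product decomposition. Your plan is: constant rank theorem, then ``globalize'' --- the flat leaves extend to entire lines, the null directions cannot rotate, so they span a fixed $k$-plane $V$, and $u$ is $V$-invariant. You assert these steps and then concede they are the main obstacle; as written they are not proved, and they are not routine. Pointwise degeneracy of a convex hypersurface produces no flat segment at all (think of $y=x^4$ at the origin), so everything rests on the constant-rank/nullity structure; moreover real-analyticity by itself does not propagate the rank across level sets (the rank of an analytic family of quadratic forms can drop on a single level), so even the ``constant rank throughout $\Omega$'' step needs the full strength of a maximum-principle-type constant rank theorem, not analytic continuation. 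After that one must still (i) extend local affine leaves to complete lines inside $\Omega$ --- e.g.\ $u$ is real-analytic along a line and constant on a subsegment, hence constant on the whole chord, and the chord cannot meet $\partial\Omega$ since $u$ vanishes there; (ii) show the directions are globally fixed, e.g.\ via the lineality property of convex sets (a closed convex set containing a full line is invariant under translations in that direction); and (iii) pass from invariance of individual level sets to invariance of $u$. None of this is carried out in your proposal. The paper does not carry it out either --- it cites Gabriel's theorem as stated in H\"ormander (Theorem 3.1.14 in \cite{Ho}), whose conclusion is precisely the missing statement: if the convex level sets of a harmonic function are nowhere strictly convex, there is a fixed $k$-dimensional subspace $T_0$ such that $u$ is locally constant along the affine spaces $x+T_0$. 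Given that, the paper's remaining argument coincides with your last step: convexity of $\Omega$ makes $\Omega\cap(x+T_0)$ connected, so $u$ is constant there, and since $u>0$ in $\Omega$ while $u=0$ on $\partial\Omega$, each affine space $x+T_0$ lies entirely in $\Omega$, forcing $\Omega=\widetilde{\Omega}\times\R^k$ after a rotation. So your outline is fillable, but only by proving or citing the Gabriel/H\"ormander structure theorem; as it stands, the everywhere-or-nowhere dichotomy and the product structure rest on an unproved assertion.
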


 As a consequence of Theorem \ref{convex} we obtain that, if in addition to convexity, $\Omega$  possesses some symmetry (either rotational or reflection), then the maximum of $u$ along the slice $\Omega\cap\left(\{t\}\times\mathbb{R}^d\right)$ is attained at $(t,0).$

\begin{cor}\label{slice}
Let $f:(0,\infty)\longrightarrow\mathbb{R}^{+}$ be a Lipschitz function such that $f'(t)$ is decreasing on $(0, \infty)$ and approaches $0$ as $t\to\infty$. Suppose $D\subset\mathbb{R}^{d}$, $d\geq 1$, is a bounded, convex domain containing the origin. Moreover, suppose that $D$ is symmetric, i.e., $w\in D$ iff 
$-w\in D$. Let $$\Omega = \{(t,Y)\in\mathbb{R}^{d+1}: t>0, Y\in f(t)D\}.$$  For fixed $t>0$, set $\theta_t = \Omega\cap\left(\{t\}\times\mathbb{R}^d\right)$. Then the maximum of any Martin function $u$ on $\Omega$, when restricted to $\theta_t$, is attained at $(t,0)$. Furthermore, if $L$ is any ray in the slice $\theta_t$ from $(t,0)$ to the boundary, $\partial\theta_t$, of $\theta_t$, then $u$ is strictly decreasing along $L$ as one moves away from $(t,0)$.
\end{cor}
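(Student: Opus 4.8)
The plan is to verify that $\Omega$ satisfies the hypotheses of Theorem \ref{convex}, to upgrade the conclusion to strict convexity, and then to combine strict convexity with the reflection symmetry $R\colon (t,Y)\mapsto(t,-Y)$ to locate the maximum on each slice. First I would check that $\Omega$ is convex. Since $D$ is a bounded, symmetric, convex domain containing the origin, its Minkowski gauge $\gamma_D(Y)=\inf\{\lambda>0:Y\in\lambda D\}$ is a norm, and $Y\in f(t)D$ exactly when $\gamma_D(Y)<f(t)$. Hence $\Omega=\{t>0\}\cap\{(t,Y):\gamma_D(Y)-f(t)<0\}$. The map $Y\mapsto\gamma_D(Y)$ is convex, and because $f'$ is decreasing $f$ is concave, so $t\mapsto -f(t)$ is convex; thus $(t,Y)\mapsto\gamma_D(Y)-f(t)$ is convex, and $\Omega$, being the intersection of one of its sublevel sets with the half-space $\{t>0\}$, is convex. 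Theorem \ref{convex} then gives that every $\Gamma_c$ is convex.

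Next I would invoke the dichotomy in Theorem \ref{convex} to obtain strict convexity everywhere. It suffices to rule out that $\Omega$ is, after a rotation, a product $\widetilde{\Omega}\times\mathbb{R}^k$ with $k\ge1$; equivalently, to show that $\Omega$ contains no full line, a rotation-invariant property. A line in a direction $(\alpha,\beta)\in\mathbb{R}\times\mathbb{R}^d$ with $\alpha\ne0$ meets $\{t\le0\}$ and hence leaves $\Omega\subset\{t>0\}$, while a line with $\alpha=0$ lies in a single slice $\{t=t_0\}$, which is bounded because $D$ is bounded. So $\Omega$ contains no line, it is not such a product, and by the theorem the sets $\Gamma_c$ are strictly convex everywhere; in particular no level set $\{u=c\}$ contains a line segment.

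The decisive step, and the one I expect to be the main obstacle, is the symmetry $u(t,Y)=u(t,-Y)$. Because $D$ is symmetric, $R$ is an isometry of $\Omega$, so $v:=u\circ R$ is again a positive harmonic function vanishing on $\partial\Omega$, i.e.\ another Martin function. Symmetry of $u$ is thus equivalent to $u=v$, which follows once one knows the Martin function is unique up to a positive multiple: then $v=\lambda u$, and applying $R$ twice forces $\lambda=1$. Since $\Omega$ has a single end, along which $f$ is concave with $f'\to0$ so that $\partial\Omega$ flattens, this uniqueness is the statement that the Martin boundary at infinity is a single point; I would establish it by a boundary Harnack / Phragm\'en--Lindel\"of argument adapted to this flattening tube, or by citing the corresponding result. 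This is where the hypotheses on $f$ are genuinely used.

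Granting symmetry, the conclusion follows from convexity. Fix $t_0>0$ and set $S_c=\{Y:u(t_0,Y)>c\}=\Gamma_c\cap(\{t_0\}\times\mathbb{R}^d)$, which is convex by Theorem \ref{convex} and symmetric by the previous paragraph. For any $Y_1\in\theta_{t_0}$ and any $c<u(t_0,Y_1)$, both $Y_1$ and $-Y_1$ lie in $S_c$, so $0=\tfrac12\bigl(Y_1+(-Y_1)\bigr)\in S_c$ and $u(t_0,0)>c$; letting $c\uparrow u(t_0,Y_1)$ gives $u(t_0,0)\ge u(t_0,Y_1)$, whence the slice maximum is attained at $(t_0,0)$. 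Finally, along a ray $s\mapsto(t_0,se)$ the function $g(s)=u(t_0,se)$ has symmetric convex superlevel sets and so is nonincreasing for $s\ge0$; were it constant on a subinterval, the corresponding segment would lie in a level set $\{u=c\}$, contradicting strict convexity. Hence $g$ is strictly decreasing, so $u$ strictly decreases along every ray from $(t_0,0)$, completing the proof.
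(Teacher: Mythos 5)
Your proposal is correct and follows essentially the same route as the paper's proof: convexity of $\Omega$ together with uniqueness of the Martin function up to a constant (giving the symmetry $u(t,Y)=u(t,-Y)$), then ruling out the product case of Theorem \ref{convex} because $\Omega$ contains no affine line, so the superlevel sets are strictly convex, and finally combining symmetry with strict convexity to get strict decrease along each ray. The uniqueness step you flagged as the main obstacle is handled in the paper exactly as you anticipated, by citing DeBlassie's Theorem 1.1 (stated there for $D$ a ball, with the general case deduced from the same proof).
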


\noindent \textbf{Remark:} If $D$ above is taken to be the unit ball $\mathbb{B}_{1}(0)\subset\mathbb{R}^{d}$ centered at the origin, then Corollary \ref{slice} implies that $u^t(Y):=u(t, Y)$ is a radial, decreasing function of $Y$ in the ball $\mathbb{B}_{f(t)}(0)$ of radius $f(t)$.

\subsection*{Acknowledgement}
We thank Alexandre Eremenko for useful discussions and suggestions.
We are also greatly indebted to the referee for pointing out an error in an earlier version of Theorem \ref{convex}.

\section{Proofs}

\begin{proof}[Proof of Theorem \ref{convex}]
Let $G$ denote the Green function of $\Omega.$ Fix a reference point $x_0\in\Omega$. Without loss of generality it may be assumed that $u(x_0) = 1$. There exists a  sequence $\{x_n\}_{n\in\mathbb{N}}$ in $\Omega$ with no accumulation points in $\partial\Omega$ such that 
$$u(x) = \lim_{n\to\infty}\dfrac{G(x, x_n)}{G(x_0, x_n)}$$
holds, see, e.g., Section $7.1$ in \cite{Pin}, or Chapter $8$ in \cite{AG}. 
For fixed $c>0$ set $$F_n = \{x\in\Omega: G(x, x_n) > c\cdot G(x_0, x_n)\}.$$
Note that if $x\in\cup_{k=0}^{\infty}\cap_{n=k}^{\infty}F_n$, then $x\in F_n$ for all but finitely many $n\in\mathbb{N}$. Hence $u(x)\geq c,$ i.e., $x\in\overbar{\Gamma_c}.$ Moreover, if $x\in\Gamma_c$, then it follows that $x\in F_n$ for all integers $n$ sufficiently large. Thus
\begin{equation}\label{inclusion}
\overbar{\cup_{k=0}^{\infty}\cap_{n=k}^{\infty}F_n}=\overbar{\Gamma_c}.
\end{equation}
Since each $F_n$ is strictly convex (cf. \cite{Ho}, Theorem 3.3.27) and the intersection of a family of convex sets is convex, it follows that $L_k:= \cap_{n=k}^{\infty}F_n$ is convex for each $k\in\mathbb{N}$. The family $\{L_k\}_{k\in\mathbb{N}}$ is an increasing family of convex sets and hence their union $\cup_{k=0}^{\infty}\cap_{n=k}^{\infty}F_n$ is convex. Taking the closure and using equation \eqref{inclusion}, we obtain that $\overbar{\Gamma_c},$ and hence $\Gamma_c,$ is convex.
Finally, by Gabriel's theorem (Theorem 3.1.14 in \cite{Ho}), if a level set of a harmonic function is convex then it is either strictly convex everywhere or nowhere.  Moreover, if the level sets are nowhere strictly convex, there exists a $k$-dimensional subspace $T_0$, $k \geq 1$, such that $u$ is locally constant along the affine spaces $T_x$ generated by $x \in \Omega$ and $T_0$.  As $\Omega$ is convex, $\Omega \cap T_x$ is connected, and therefore $u$ is constant along each $T_x$.  Hence, after a rotation we may assume that $u$ does not depend on the last $k$ coordinates.  As $u$ is a Martin function, in particular a defining function for the domain, the domain is of the form $\widetilde{\Omega} \times \R^k$, for some $\widetilde{\Omega} \subset \R^{d+1-k}$. 
\end{proof}

\begin{proof}[Proof of Corollary \ref{slice}]
It suffices to prove that $u$ is strictly decreasing along any ray $L\subset\theta_t$ as one moves away from $(t,0)$. Note that $\Omega$ is a convex, unbounded domain under the conditions stated. Also, the assumptions on $f$ guarantee that Martin functions on $\Omega$ are unique up to a constant multiple, cf. Theorem $1.1$ in \cite{DeB}, where this is stated when $D$ is a ball. For all other domains $D$ this can easily be deduced from the proof in \cite{DeB}. The upshot of this fact is that $u(t, Y) = u(t, -Y).$ The domain $\Omega$ does not contain any one-dimensional affine subspace, and therefore it is not a product space. It now follows from Theorem \ref{convex} that the superlevel sets $\Gamma_c$, $c>0$, are strictly convex. Note that any ray can intersect a convex surface at most two times. So as one moves along $L\subset\theta_t$, starting from the center $(t,0)$, where $u$ assumes a positive value, to the boundary where $u=0$, $L$ crosses each level set exactly once -- the other crossing happens at the symmetric point. This shows that $u$ is strictly decreasing along $L$ and concludes the proof.
\end{proof}

\vspace{0.1in}

\noindent In dimension 2,  Theorem \ref{convex} and Corollary \ref{slice} may be derived using conformal mapping methods. In fact, a slightly stronger result is obtained. In the theorem below and the examples that follow, we use the complex notation $z = x+ iy.$ 

\begin{thm}
Let $\Omega \subset \{ z \in \C : \Re z > 0 \}$ be a convex domain,
which contains the positive real axis $\R_+\times\{0\}$ and is
symmetric with respect to it.  Let $u$ be a Martin function on $\Omega$.  
\begin{itemize}
  \item[(1)] Then the superlevel sets $\Gamma_c$, $c>0$, are convex. In fact, $\Gamma_c$ is strictly convex unless $\Omega$ is the right half plane.

\vspace{0.05in}

\item[(2)] For fixed $x>0$, set $\theta_{x}=\Omega\cap\{x+i\mathbb{R}\}$. Then 
$$\sup_{\theta_x} u = u(x).$$ 
Moreover, if $\Omega$ is not the right half plane, then $u(z) < u(x)$
for every $z \in \theta_x \setminus \{ x \}$, in fact, as one moves
along $\theta_x$ away from the real axis, then $u$ is strictly decreasing.
\end{itemize}
\end{thm}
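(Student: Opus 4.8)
The plan is to prove the two-dimensional statement using conformal mapping, exploiting the special structure of planar domains. First I would set up a conformal map. The strategy here is fundamentally different from the general-dimensional proof of Theorem~\ref{convex}, which relied on the Martin boundary approximation: in the plane we have the Riemann mapping theorem at our disposal. Since $\Omega$ is convex and contained in the right half plane, I would map $\Omega$ conformally onto a strip or half plane, say via a map $\varphi$ that straightens the geometry. The key observation is that a positive harmonic function vanishing on $\partial\Omega$ pulls back under such a conformal change, and harmonicity is conformally invariant in two dimensions. The symmetry of $\Omega$ about the real axis, combined with the uniqueness of the Martin function (which forces $u(x+iy) = u(x-iy)$), should let me arrange the conformal map to respect the reflection symmetry.

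For part~(1), I would derive convexity of the superlevel sets from the convexity statement already established in Theorem~\ref{convex}; indeed the planar case is a special instance ($d=1$). The additional content is the strict convexity dichotomy: by Theorem~\ref{convex}, the superlevel sets are strictly convex everywhere or nowhere, and in the nowhere case $\Omega$ must split as a product $\widetilde\Omega \times \R$. In the plane, a convex domain contained in the right half plane that is a product with $\R$ must be a half plane (after accounting for the containment $\Re z > 0$ and symmetry about the real axis, the only such product is the right half plane itself). So the strict convexity fails precisely when $\Omega$ is the right half plane, which gives the stated sharp form.

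For part~(2), I would argue exactly as in the proof of Corollary~\ref{slice}. Since $u(x+iy) = u(x-iy)$ by the symmetry, the slice value $u(x)$ at the real axis is a natural candidate for the maximum over $\theta_x$. Assuming $\Omega$ is not the right half plane, part~(1) gives strict convexity of every superlevel set. A ray emanating from the point $x$ on the real axis into $\theta_x$ can cross any given strictly convex level curve at most once as one moves outward, because the symmetric crossing occurs on the reflected ray. This monotonicity along each such ray yields that $u$ is strictly decreasing as one moves along $\theta_x$ away from the real axis, and hence $\sup_{\theta_x} u = u(x)$ with the supremum attained only at $x$. When $\Omega$ is the right half plane, the Martin function is (up to scaling) $u(z) = \Re z = x$, which is constant along $\theta_x$, giving the weak inequality $\sup_{\theta_x} u = u(x)$ without strict monotonicity.

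The main obstacle I anticipate is justifying the conformal-mapping refinement that makes the planar result \emph{stronger} than the specialization of Theorem~\ref{convex}, namely controlling the behavior of the Martin function under the conformal map near the boundary and at infinity, where the map $\varphi$ may degenerate. In particular, verifying that the pulled-back function remains a genuine Martin function on the image domain — positive, harmonic, and vanishing on the right boundary portion — requires care about how the point at infinity and the unbounded direction are handled by $\varphi$, and about preserving the normalization used to single out $u$.
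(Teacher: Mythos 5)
Your part~(1) is correct and follows a genuinely different route from the paper's. You specialize Theorem~\ref{convex} to $d=1$: convexity of $\Gamma_c$ is immediate, and in the nowhere-strictly-convex case $\Omega$ is invariant under translation in some direction, which (since $\Omega\supset\R_+\times\{0\}$ and $\Omega\subset\{\Re z>0\}$) must be vertical, forcing $\Omega$ to be the right half plane. That argument is complete. The paper instead reproves convexity independently of Theorem~\ref{convex}, via a Riemann map $\varphi\colon\D\to\Omega$ and Study's theorem (images of discs under a univalent map onto a convex domain are convex). Your route buys brevity; the paper's buys something you end up needing and not having: see below.

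The genuine gap is in part~(2). Your entire argument rests on the symmetry $u(x+iy)=u(x-iy)$, which you derive from ``the uniqueness of the Martin function'' --- but you never prove uniqueness, and in this generality it is not an off-the-shelf fact. DeBlassie's theorem \cite{DeB}, which supplied uniqueness in Corollary~\ref{slice}, requires the flatness hypothesis $f'(t)\to 0$; that is not assumed here (the sector $\{z : \Re z>0,\ |\Im z|<\Re z\}$ satisfies every hypothesis of this theorem but not DeBlassie's). Uniqueness is precisely the content of the paper's conformal-mapping argument: one first shows $\partial\Omega$ is connected (a graph) and Lipschitz, so that $\varphi^{-1}$ extends up to the boundary and carries $\partial\Omega$ onto $S^1\setminus\{i\}$, with the single prime end at infinity going to $i$; then Herglotz's theorem says any positive harmonic function on $\D$ vanishing continuously on $S^1\setminus\{i\}$ is a constant multiple of the Poisson kernel at $i$. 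Hence every Martin function on $\Omega$ equals $C\,v\circ\varphi^{-1}$, which gives uniqueness (and, via Study's theorem, convexity of $\Gamma_c^u=\varphi(\Gamma_c^v)$ as the image of a horodisc). You gesture at a conformal map ``onto a strip or half plane'' and even identify the behavior at infinity as the ``main obstacle,'' but you never carry out this step; as written, part~(2) assumes exactly what has to be proved. Once uniqueness is in hand, your ray-crossing argument (as in Corollary~\ref{slice}) and your treatment of the half-plane case are fine.
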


\begin{proof}
%
Let us first show that the boundary, $\partial\Omega$, of $\Omega$ is connected.  The domain is symmetric, so it only needs to be shown that
$\partial \Omega \cap \{x+iy : y > 0\}$ is connected.  It follows from convexity of $\overline{\Omega}$ that for
each fixed $x_0$, the set $$I_{x_0} = \partial \Omega \cap \{ x_0+iy : y > 0\}$$ is either a single point or empty.
To wit, if $I_{x_0}$ was a segment, then taking convex combinations of points of $I_{x_0}$ with points on the positive real axis, $\mathbb{R}^{+}\times\{0\},$ would yield a contradiction to convexity of $\overline{\Omega}$.
If $I_{x_0}$ is empty for some $x_0$, then it follows from the convexity of $\Omega$, that $\Omega$ contains all line segments between points on the line $x=x_0$ and points on the real positive $x$-axis. Hence $\Omega$ is the right half plane.
Therefore if the domain is not the right half plane, the boundary is a graph of a function, and is therefore
connected.

The boundary, $\partial\Omega$, is also Lipschitz as it is convex.
Let $\varphi \colon \D \to \Omega$ be a Riemann mapping.
By regularity of Riemann mappings, the function $\varphi^{-1}$ is Lipschitz up to $\partial \Omega$.
Moreover, it follows from Herglotz's theorem that if $v$ is a positive harmonic function on the unit disc $\mathbb{D}$,
that extends continuously to $\overline{\mathbb{D}}\setminus\{ i \}$ with $v =0$ on $S^1\setminus\{ i \},$ then $v$ is a constant multiple of the Poisson Kernel, based at $i$. Thus the level sets of $v$ are horocycles touching the unit circle at $i$. Hence $\Gamma_c^v$ are discs for $c>0$.
Next note that we may assume that $\varphi^{-1}$ takes $\partial \Omega$ to $S^1 \setminus \{ i \}$. Then we find the
function $u$ equals $C v \circ \varphi^{-1}$ for some constant $C$.  In particular $u$ is unique up to a multiplicative
constant.
Without loss of generality we assume that $C=1$.  In other words, the superlevel sets
of $u$ and $v$ for any $c>0$ are related by
\begin{equation*}
\varphi\left(\Gamma_c^v\right) = \Gamma_c^u.
\end{equation*}
By a theorem of Study, see pg. 273 in \cite{RR},
the image of a disc in $\D$ under $\varphi$ is convex since $\Omega$ is convex.
Thus $\Gamma_c^u$ is convex as it is the image of the disc $\Gamma_c^v$.

By the uniqueness of $u$ up to
a constant multiple, we find that $u$ must be invariant under conjugation,
that is $u(x+iy)=u(x-iy)$. The remainder of the proof now follows as in the proof of Corollary \ref{slice}. 
%
\end{proof}

\vspace{0.1in}

\section{Examples} 

\begin{example}
Let us illustrate the result on a simple example.  Let $\Omega \subset \R^2$ be
$$
\Omega =
\left\{(x,y)\in\mathbb{R}^2: x > 0, -\pi/2 < y < \pi/2 \right\} .
$$
Then $u(x,y) = \sinh(x) \cos(y)$ is a Martin function on $\Omega$.
See Figure~\ref{fig:levelsetssimple} for a plot of the level sets.  Notice
that on each $\theta_x$ the function attains a strict maximum when $y=0$.
\begin{figure}[h!t]
\begin{center}
\includegraphics{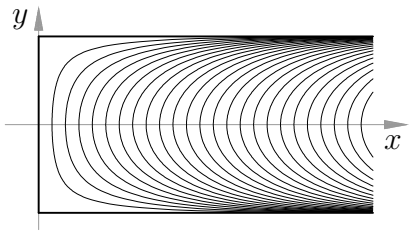}
\end{center}
\caption{Level sets of the Martin function
$u(x,y) = \sinh(x) \cos(y)$.\label{fig:levelsetssimple}}
\end{figure}
\end{example}

\begin{example}
In contrast to Theorem \ref{convex}, it is possible to find a non-convex domain and a Martin function for which \emph{every} superlevel set $\Gamma_c$ is non-convex. To illustrate this, let consider $\Omega\subset\mathbb{R}^2$ defined by $$\Omega= \left\{(x,y)\in\mathbb{R}^2: x > 0, \|(x,y)\|>1\right\}.$$
Clearly $\Omega$ is non-convex. For $(x,y)\in\Omega$, define $$u(x,y) = x- \dfrac{x}{x^2 + y^2}.$$ 
It is easy to check that $u$ is a Martin function on $\Omega$.
Observe that $u(x,y) = u(x,-y)$, i.e., every level set 
$\{(x,y)\in\Omega: u(x,y) = c\}$ crosses the $x$-axis. Next note that 
$u(x, y) = u(x, -y) > u(x,0)$ for all $y\neq 0$ and $x>1$. It now follows that every superlevel set $\Gamma_c$ is non-convex. In fact, fix an $x_0>1$ and consider the set $\Gamma = \{(x,y)\in\mathbb{R}^2: u(x,y) > u(x_0, 0)\}$. The first observation guarantees that every superlevel set of $u$ can be written in this form for some $x_0 >1$. It follows from the second observation that both
$(x_0,y)$ and $(x_0,-y)$ belong to $\Gamma$ for $y\neq 0$, but their midpoint $(x_0, 0)$ does not.

\vspace{0.1in}

In view of the previous example, it is natural to ask whether non-convexity also propagates. That is, if we start with a non-convex domain, do the level sets of a Martin function remain non-convex? Our next example shows that this need not be the case.
\end{example}

\begin{example}
Let $S= \{z\in\mathbb{C}: \Re(z)>0, \hspace{0.05in} |\Im(z)|< \Re(z)\}$ and set $\Omega = S\setminus[0, 1].$ Then $\Omega$ is non-convex. By mapping $\Omega$ conformally onto the right half plane, we obtain that
$$u(z) = \Re\sqrt{z^4 - 1}$$
is a Martin function on $\Omega$. It is clear that if $\epsilon >0$ is sufficiently small, then the superlevel set $\Gamma_{\epsilon}$ is non-convex, see Figure~\ref{fig:levelsets}. We show that if $c$ is sufficiently large,  then $\Gamma_{c}$ is convex. This is done by comparing $u$ with $v(z) = \Re(z^2)$. Note that $v$ is a Martin function on $S$. By Theorem \ref{convex}, it follows that the level sets of $v$ are strictly convex. Since $v-u$ and its derivatives up to second order vanish at infinity, it is expected that $\Gamma_c^u$ is also convex for $c$ sufficiently large. 

\begin{figure}[h!t]
\begin{center}
\includegraphics{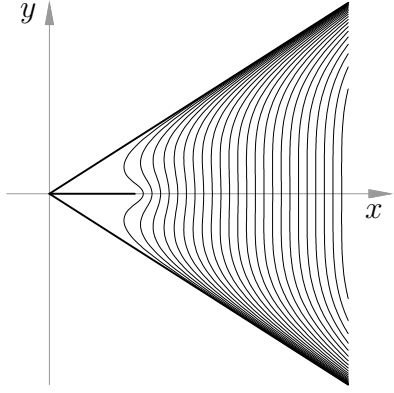}
\end{center}
\caption{Level sets of the Martin function $u(z) = \Re \sqrt{z^4-1}$.\label{fig:levelsets}}
\end{figure}

\vspace{0.1in}

\noindent To wit, let $z_0\in\Omega$ be a point with $u(z_0) = c$ for some large, fixed $c$ (to be chosen later). This forces  $v(z_0)\geq\sqrt{c}.$ Write $H_u$ and $H_v$ for the real Hessian matrix of $u$ resp. $v$. Then $$H_u=
\begin{bmatrix}
u_{xx} & u_{xy}\\
u_{yx}& u_{yy}\\
\end{bmatrix}
  \;\;\text{and}\;\; H_v=
\begin{bmatrix}
2 & 0\\
0&-2\\
\end{bmatrix}.
$$
 Denote by $A^{*}$ the transpose of a matrix $A$ and by $||A||$ the operator norm of $A.$ Set $T_u=(u_y,-u_x)$ and $T_v=(v_y,-v_x)$. Note that $T_u(z_0)$ is tangential to $\{z\in\Omega: u(z)=u(z_0)\}$ at $z_0$, and in fact spans the tangent space there; analogous statements hold for $T_v$. Moreover,  $T_v(z) = 2(y,x)$. It is then easy to see that 
\begin{equation}\label{hessv}
\left(T_v^{*}H_vT_v\right) (z) = -8v(z) < 0, \hspace{0.07in} 
\left\|(H_vT_v)(z)\right\|= \mathcal{O}\left(|z|\right).
\end{equation}
We will show that
\begin{equation}\label{hessu}
\left(T_u^{*}H_u T_u\right) (z_0) = \left(T_v^{*}H_v T_v\right) (z_0) + \mathcal{O}\left(|z_0|^{-2}\right) = -8v(z_0) + \mathcal{O}\left(|z_0|^{-2}\right) < 0
\end{equation}
as this implies the strict convexity of $\Gamma_{c}$ for $c$ chosen sufficiently large.

To prove \eqref{hessu}, set $f(z) = z^2 - \sqrt{z^4 - 1}$ and notice that $f$ is holomorphic in $\Omega$. Furthermore,  for large values of $|z|$, it follows that $$|f''(z)| = \mathcal{O}\left(|z|^{-4}\right)\text{ and } |f'(z)| = \mathcal{O}\left(|z|^{-3}\right).$$ Since $\Re(f(z)) = v(z) - u(z)$ it now follows that
\begin{equation}\label{bigO}
\left\|(H_u - H_v)(z)\right\|= \mathcal{O}\left(|z|^{-4}\right), \hspace{0.05in} \left||(T_u - T_v)(z)\right|| = \mathcal{O}\left(|z|^{-3}\right).
\end{equation}
Therefore we have
\begin{align}\label{est1}
(T_u^{*}H_uT_u - T_v^{*}H_vT_v)(z) 
  &= \left(T_u^{*}\left(H_uT_u - H_vT_v\right)\right)(z) + \left(\left(T_u^{*} 
  - T_v^{*}\right)H_vT_v\right)(z) \\
  &= \left(T_u^{*}\left(H_uT_u - H_vT_v\right)\right)(z) 
  + \mathcal{O}\left(|z|^{-3}\right)|z|\notag\\
  &= \left(T_u^{*}\left(H_uT_u - H_vT_v\right)\right)(z) + \mathcal{O}\left(|z|^{-2}\right).\notag
\end{align}
Here equations \eqref{hessv} and \eqref{bigO} were used to bound the second term on the right hand side in \eqref{est1}. We then write 
\begin{align}\label{est2}
\left(T_u^{*}\left(H_uT_u - H_vT_v\right)\right)(z)&= \left(T_u^{*}\left(H_u-H_v\right)T_u\right)(z) - \left(T_u^{*}H_v\left(T_v - T_u\right)\right(z).
\end{align}
In \eqref{est2}, the first term on the right can be estimated by equation \eqref{bigO} using the fact that $||T_u(z)|| = \mathcal{O}\left(|z|\right)$. This implies that $$\left\|\left(T_u^{*}\left(H_u-H_v\right)T_u\right)(z)\right\| = \mathcal{O}\left(|z|^{-4}\right)|z|^2 = \mathcal{O}\left(|z|^{-2}\right).$$ 
Similarly we obtain 
$$||\left(T_u^{*}H_v\left(T_v - T_u\right)\right)(z)||=
\mathcal{O}\left(|z|^{-2}\right)$$ 
using equations \eqref{hessv} and \eqref{bigO}. These estimates  prove \eqref{hessu}. Hence the strict convexity of $\Gamma_c$ holds for sufficiently large $c$. 
\end{example}

\section{Maximum on a slice}
In Section $2$, it was shown that, when $\Omega$ is convex and
symmetric, the maximum of any Martin function on $\Omega$, when restricted to a slice $\theta_t$, occurs at $(t,0)$. In this section, it is shown that this phenomenon is also valid for rotationally symmetric domains whose boundaries are sufficiently flat, without imposing any convexity assumptions. To be more specific, let 
$$\Omega = \left\{(t, Y)\in\mathbb{R}^{d+1}: t>0, |Y| < f(t)\right\},$$
where $f$ is a positive Lipschitz function defined on $(0,\infty)$ such that $f'(t)\to 0$ as $t\to\infty$. Then $\Omega$ is an unbounded domain that is  rotationally invariant in the $Y$-direction. Suppose that $u$ is a Martin function on $\Omega$. For fixed $t>0$, we consider the function $u^t(Y) = u(t, Y)$ and prove that the maximum of $u$ on the slice $\theta_t = \Omega\cap\left(\{t\}\times\mathbb{R}^d\right)$ occurs at $(t, 0)$ for $t$ sufficiently large. 

\begin{thm}
Let $\Omega$, $u$ and $u^t$ be as in the previous paragraph. Then there exists a $t_0>0$ such that  $$\max_{\theta_t}u = u^t(0) = u(t,0)$$
holds for all $t>t_0$.
\end{thm}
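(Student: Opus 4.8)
The plan is to exploit the rotational symmetry of $\Omega$ to reduce the problem to a one-dimensional statement about the radial profile of $u^t$, and then to feed the flatness hypothesis $f'(t) \to 0$ into a comparison argument. First I would use the rotational invariance: since $\Omega$ is invariant under rotations in the $Y$-direction and Martin functions on such domains are unique up to a constant multiple (as established in the discussion preceding Corollary \ref{slice}), the Martin function must itself be rotationally symmetric, so $u^t(Y) = u^t(|Y|)$ depends only on the radius $r = |Y|$. Thus the claim $\max_{\theta_t} u = u(t,0)$ is equivalent to showing that the radial function $r \mapsto u^t(r)$ is decreasing on $[0, f(t))$, or at least that its maximum over this interval is attained at $r = 0$. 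This recasts the $(d+1)$-dimensional maximum principle question as a question about the sign of $\partial_r u^t$ near the axis for large $t$.

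Next I would set up a comparison with the asymptotically flat model. As $t \to \infty$, the hypothesis $f'(t) \to 0$ means the boundary $\partial\Omega$ becomes asymptotically parallel to the $t$-axis, so near infinity $\Omega$ looks like the straight cylinder $\{(t,Y) : |Y| < f(t)\}$ with slowly varying radius, which in turn resembles an infinite cylinder $\bR \times \bB_{f(t)}(0)$. On such a cylinder the Martin function separating variables is of the form $e^{\lambda t} \phi(Y)$, where $\phi$ is the first Dirichlet eigenfunction of the Laplacian on the ball $\bB_{f(t)}(0)$ — and this eigenfunction is radial and strictly decreasing in $|Y|$ (it is a Bessel-type function, positive, vanishing on the boundary, with a unique interior maximum at the center). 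The strategy is to show that the actual $u^t$ inherits this monotonicity for large $t$: I would write $u^t(r)$, differentiate in $r$, and argue that the radial derivative is controlled by the eigenfunction's radial derivative plus an error term that is driven by $f'(t)$ and hence negligible as $t \to \infty$.

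The cleanest route is probably to invoke Theorem \ref{convex}-style convexity indirectly or to run a direct maximum-principle / moving-planes argument on the slice. Concretely, I would fix a unit direction $\omega \in S^{d-1}$ and consider the function along the ray $s \mapsto u(t, s\omega)$; the goal is $\partial_s u(t,s\omega) < 0$ for $s \in (0, f(t))$ and $t > t_0$. Using the radial symmetry, $u$ restricted to a two-dimensional half-plane $\{(t, s\omega) : s \ge 0\}$ satisfies a degenerate elliptic equation (the Laplacian in polar coordinates picks up the $\tfrac{d-1}{r}\partial_r$ drift term), and the reflection/moving-planes method — comparing $u$ with its reflection across hyperplanes $\{s = \lambda\}$ — should force monotonicity provided the domain boundary is sufficiently flat that the reflected domain nests correctly inside $\Omega$. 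The flatness condition $f'(t) \to 0$ is exactly what guarantees, for $t$ large, that reflecting the near-axis region across a plane stays inside $\Omega$, so Serrin's boundary point lemma and the strong maximum principle can be applied.

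The hard part will be making the perturbative comparison rigorous and genuinely uniform: one must quantify how the error introduced by the nonzero slope $f'(t)$ degrades the monotonicity, and show that for $t > t_0$ it cannot reverse the sign of the radial derivative anywhere on the open slice. In particular the moving-planes argument requires that the domain be monotone (in the appropriate sense) under reflection, which is only approximately true here; one must either localize the argument to a compact family of slices and use the asymptotic flatness to absorb the defect, or replace the exact reflection with a comparison against the straight cylinder and carefully estimate the resulting remainder. I expect this uniform control of the error term — converting the qualitative hypothesis $f'(t)\to 0$ into a quantitative threshold $t_0$ beyond which the sign of $\partial_r u^t$ is fixed — to be the main technical obstacle, whereas the reduction to a radial one-variable problem and the identification of the limiting eigenfunction profile are comparatively routine.
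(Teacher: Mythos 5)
There is a genuine gap. Your proposal correctly gets the radial symmetry of $u^t$ from DeBlassie's uniqueness theorem (exactly as the paper does), and correctly intuits that the infinite cylinder with its Martin function $(Ae^{\sqrt{\lambda}t}+Be^{-\sqrt{\lambda}t})\phi(Y)$ is the relevant model. But the core step --- converting the qualitative hypothesis $f'(t)\to 0$ into the conclusion --- is left as an announced ``main technical obstacle'' rather than carried out, and the route you sketch for it does not work as stated. The moving-planes argument in the $Y$-variables faces two problems. First, you misidentify the obstruction: the nesting of the reflected cap inside $\Omega$ is not ``only approximately true,'' it is \emph{exactly} true, because each slice $\theta_t$ is a round ball, so reflection across any hyperplane $\{Y_1=\lambda\}$, $\lambda>0$, maps the cap $\Omega\cap\{Y_1>\lambda\}$ into $\Omega$. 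The genuine obstruction is elsewhere: the cap is \emph{unbounded} in the $t$-direction, and the comparison function $w=u\circ(\text{reflection})-u$ is harmonic with $w\geq 0$ on $\partial(\text{cap})$, but one cannot conclude $w\geq 0$ inside without a Phragm\'en--Lindel\"of argument, and Martin functions on such domains typically grow exponentially in $t$ (at exactly the critical rate for such principles), so this step genuinely fails without a new idea. If moving planes did go through as you describe, it would prove radial monotonicity for \emph{all} $t>0$ with no flatness needed for the geometry, which should itself be a warning sign. Second, doing the reflection in the radial coordinate $s=|Y|$, as you propose, runs into the additional problem that the operator $\partial_{tt}+\partial_{ss}+\frac{d-1}{s}\partial_s$ is not invariant under $s\mapsto 2\lambda-s$.

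For contrast, the paper's actual mechanism is different and avoids monotonicity altogether: it rescales, setting $v_s(\xi)=u(f(s)\xi+se_1)/M(s)$ on the rescaled domains, uses Harnack and a uniform boundary Harnack principle (Lemmas 1 and 2 of \cite{KR}) to extract a limit which is a Martin function $v$ on the unit cylinder, computes $\partial_{tt}v>0$ explicitly from the formula $(Ae^{\sqrt{\lambda}t}+Be^{-\sqrt{\lambda}t})\phi(Y)$, and transfers this to $\partial_{tt}u>0$ on $\theta_t$ for large $t$ by uniform convergence of second derivatives. Harmonicity then gives $\Delta_d u^t=-\partial_{tt}u<0$, i.e., $u^t$ is superharmonic on the ball, and a radial superharmonic function attains its maximum at the center by the super mean value property. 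Note that this conclusion (maximum at the center) is weaker than the radial monotonicity you aim for, which is precisely why it is reachable: the flatness hypothesis enters only through the compactness/limit argument, not through any delicate quantitative error estimate of the kind your plan defers. To repair your proposal you would need either to supply the missing maximum principle at infinity for the reflection argument, or to switch to the paper's rescaling-and-compactness scheme.
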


\textbf{Remark:} When $\Omega =\{(t, Y)\in\mathbb{R}^{d+1}: |Y| < 1\} = \mathbb{R}\times \mathbb{B}_1(0)$ is a cylinder, then every Martin function $v$ has the form $v(t, Y) = (Ae^{\sqrt{\lambda}t} + Be^{-\sqrt{\lambda}t})\phi(Y),$ see \cite{MI}. Here $\lambda$ is the principal eigenvalue of $\Delta_{d}$ on $\mathbb{B}_1(0)$ and $\phi$ is the corresponding eigenfunction. In this case, it is trivial to see that for a fixed $t$, the restriction to a slice yields a constant multiple of $\phi$. Since $\phi$ is radially decreasing, the claim follows. 

\begin{proof}
We shall show that for $t$ sufficiently large, the restriction of $u$ to the slice $\theta_t$ yields a superharmonic function. This fact is then used to deduce that the maximum occurs on the $t$-axis. 

\vspace{0.1in}

The basic idea  is to first rescale $\Omega$ to make it close to a cylinder. Then we infer the behaviour of $u$ from the solution on the cylinder, see remark above. This idea has been used in \cite{KR}. In the following, we reproduce parts of the proof from \cite{KR} for the benefit of the reader.  Let $\{s_n\}_n$ be a real positive sequence  that tends to infinity. Write $e_1 = (1, 0, 0,...,0)$. Define $$\Omega_n = \{(t, Y)\in\Omega: s_n/2 < t < 3s_n/2\}\;\text{ and }\; S_n = \dfrac{\Omega_n - s_ne_1}{f(s_n)}.$$ It is easy to verify that 
$$S_n = \left\{(t,Y)\in\mathbb{R}^{d+1}: -s_n/2 < t < s_n/2, \hspace{0.1in} |Y| < \dfrac{f(s_n + tf(s_n))}{f(t_n)}\right\}.$$  
We claim that for every compact $K$, $d_H(S_n\cap K, \mathcal{C}\cap K)\to 0$ as $n\to\infty,$ where $\mathcal{C}$ is the unit cylinder $\mathbb{R}\times \mathbb{B}_1(0)$ and $d_H$ denotes the Hausdorff distance. Indeed, by the mean value theorem and the assumptions on $f$, there exists $\widetilde{s_n}$ in $[s_n, s_n + tf(s_n)]$ such that
$$\left|\dfrac{f(s_n + tf(s_n))}{f(s_n)} - 1\right| = \left|\dfrac{tf(s_n)f'(\widetilde {s_n})}{f(s_n)}\right|\to 0$$ 
\noindent  uniformly as $n\to\infty,$ provided $t$ stays in a compact set. Let $M(t) = \sup_{\theta_t}u.$ Now define a function $v_n$ on $S_n$ by 
$$v_n(\xi) = \dfrac{u(f(s_n)\xi + s_ne_1)}{M(s_n)}.$$
\noindent For each $n$, $v_n$ is a positive harmonic function on $S_n,$ $$v_n(0) = \dfrac{u(s_ne_1)}{M(s_n)}\leq 1$$ and $v_n$ vanishes on the lateral boundary of $S_n$ (inherited from $u$). Therefore one can find a subsequence, which we still call $ v_n,$ such that $v_{n}\to v$ uniformly on compact sets of $\mathcal{C}$, cf. Lemma 1 in \cite{KR}. Then $v$ is a positive harmonic function on the cylinder $\mathcal{C}$. Using a uniform Boundary Harnack principle, we can show that $v$ vanishes on the boundary $\partial\mathcal{C}$, cf. Lemma 2 in \cite{KR}. Therefore,
$$ \dfrac{u(f(s_n)\xi + s_ne_1)}{M(s_n)}\to v(\xi)$$ 
uniformly for $\xi$ on compact subsets of $\mathcal{C}$, even up to the boundary. Since this is true for any sequence $\{s_n\}$ tending to infinity, it follows that
$$\lim_{s\to\infty} v_s(\xi)=\lim_{s\to\infty}\dfrac{u(f(s)\xi + se_1)}{M(s)}= v(\xi),$$
where the convergence is uniform for $\xi$ on compact subsets of $\mathcal{C}$. This implies that the corresponding second partial derivatives in the first coordinate also converge uniformly on compacts. Recall that every Martin function $v$ on the cylinder has the form $v(t, Y) = (Ae^{\sqrt{\lambda}t} + Be^{-\sqrt{\lambda}t})\phi(Y).$ 
A straightforward computation yields $\partial_{tt}v(\xi) > 0$
for all $\xi$. Since $v_s\to v,$ for all sufficiently large $s$ we have
$$\partial_{tt}v_s(\xi) >0\;\;\text{for}\;\;\xi = (s, Y)\in\mathcal{C}.$$
Observe that if  $\xi = (0,Y)\in\mathcal{C}$,  then $f(s)\xi + se_1 = (s, f(s)Y) \in\theta_s$. Taking second partials in the first coordinate and substituting $\xi=(0,Y),$ we obtain $$\partial_{tt}v_s(\xi) = \dfrac{f(s)^2}{M(s)}\partial_{tt}u(s,f(s)Y) > 0.$$ In particular, $\partial_{tt}u(t, Y) > 0 $ on $\theta_t$ for all sufficiently large $t$. This means $\Delta_{d}u^t = - \partial_{tt}u < 0.$ Hence $u^t$ is superharmonic on the slice $\theta_t$. 

\vspace{0.1in}

\noindent The assumptions on $f$ imply that Martin functions on $\Omega$ are unique up to a constant multiple, cf. Theorem 1.1  in \cite{DeB}. As before, this forces $u(t, Y) = u(t, TY)$ for any orthogonal matrix $T\in \text{O}(d).$ In other words, $u^t(Y) = u^t(TY)$. This means that $u^t$ is a superharmonic function that is radial. A simple use of the super mean value property for superharmonic functions now yields that the maximum of $u^t$ is attained at the center of the ball, i.e., at $(t,0)$. 
\end{proof}

\noindent The above result raises a natural question: 

\vspace{0.1in} 

\noindent \textbf{Question:} Let $\Omega$ be an unbounded domain in $\mathbb{R}^{d+1}$ that contains the positive $t$-axis and is rotationally invariant in the $Y$-direction, that is $(t, Y)\in\Omega$ if and only if $(t, TY)\in\Omega$ for any orthogonal matrix $T\in \text{O}(d)$. What are necessary and sufficient geometric conditions on $\Omega$ so that if $u$ is \emph{any} Martin function on $\Omega$, then $\max_{\theta_t}u = u(t,0)$ holds for sufficiently large $t$?

\vspace{0.1in}


\begin{thebibliography}{999}

\bibitem{AH} L.V\, Ahlfors, {\it Conformal Invariants: Topics in geometric function theory,}  McGraw-Hill Series in Higher Mathematics. McGraw-Hill, (1973).

\bibitem{AG} D. H.\, Armitage and S.\, Gardiner, {\it Classical Potential Theory,} Springer Mongographs in Mathematics, (2001).

\bibitem{CS} L.\, Caffarelli and J.\, Spruck, {\it Convexity properties of solutions to some classical variational problems,} Comm. Partial Differential Equations 7 (1982), 1337–1379.

\bibitem{CMY} S.-Y.\, Chang, X.-N.\, Ma and P.\, Yang {\it Principal curvature estimates for the convex level sets of semilinear elliptic equations,} Discrete Contin. Dyn. Syst. 28 (2010), no. 3, 1151–1164.

\bibitem{DeB} D.\,DeBlassie, {\it The Martin Kernel for Unbounded Domains,} Potential Anal. 32, (2010),  no. 4, 389-404.

\bibitem{G} R.\, Gabriel, {\it A result concerning convex level surfaces of 3-dimensional harmonic functions,} J. London Math. Soc. 32 (1957), 286–294.
 
\bibitem{Ho} L.\, H\"{o}rmander, {\it Notions of Convexity,} Progress in Mathematics, Vol. 127, Birkhauser, (1994). 

\bibitem{KO} N.\, Korevaar, {\it Convex solutions to nonlinear elliptic and parabolic boundary value problems,} Indiana Univ. Math. J. 32 (1983), 603--614.

\bibitem{JL} J.\, Lewis, {\it Capacitary functions in convex rings,} Arch. Rat. Mech. Anal. 66 (1977), 201–224.

\bibitem{Long} M.\, Longinetti, {\it Convexity of the level lines of harmonic functions,} (Italian) Boll. Un. Mat. Ital. A 6 (1983), 71–75.

\bibitem{MI} I.\, Miyamoto, {\it Harmonic functions in a cylinder which vanish on the boundary,} Japan. J. Math, vol.22, No.2, (1996), 241--255.

\bibitem{Pin} R. G.\, Pinsky, {\it Positive harmonic functions and diffusion,} Cambridge Studies in Advanced Mathematics, Vol. 45, (1995).

\bibitem{KR} K.\, Ramachandran, {\it Asympotics of positive harmonic functions in certain unbounded domains,} Potential Anal. Vol 41, Issue 2, (2014), 383--405.

\bibitem{RR} R.\, Remmert, {\it Classical Topics in Function theory,} Graduate Texts in Mathematics, Vol. 122, Springer Verlag, (1998).
\end{thebibliography}
\end{document}